   \def\MR#1{}
\long\def\@savemarbox#1#2{\global\setbox#1\vtop{\hsize\marginparwidth 
    \@parboxrestore\tiny\raggedright #2}}
\renewcommand*{\backref}[1]{}
\renewcommand*{\backrefalt}[4]{
  \ifcase #1
  [No citations.]
  \or [#2]
  \else [#2]
  \fi }
\numberwithin{equation}{section}
\theoremstyle{plain}
\newtheorem{theorem}{Theorem}
\numberwithin{theorem}{section}
\newtheorem{lemma}[theorem]{Lemma}
\newtheorem{proposition}[theorem]{Proposition}
\newtheorem*{namedtheorem}{\theoremname}
\newcommand{\theoremname}{testing}
\theoremstyle{definition}
\newcommand{\from}{\colon} 
\newcommand{\half}{{\frac{1}{2}}}
\newcommand{\HH}{{\mathbb{H}}}
\newcommand{\RR}{{\mathbb{R}}}
\newcommand{\ZZ}{{\mathbb{Z}}}
\newcommand{\CC}{{\mathbb{C}}}
\newcommand{\SSS}{{\mathbb{S}}}
\newcommand{\calL}{{\mathcal{L}}}
\newcommand{\refthm}[1]{Theorem~\ref{Thm:#1}}
\newcommand{\reflem}[1]{Lemma~\ref{Lem:#1}}
\newcommand{\refprop}[1]{Proposition~\ref{Prop:#1}}
\newcommand{\refsec}[1]{Section~\ref{Sec:#1}}
\newcommand{\reffig}[1]{Figure~\ref{Fig:#1}}
\newcommand{\bdy}{\partial}
\newcommand{\vol}{\operatorname{vol}}
\newcommand{\PSL}{\operatorname{PSL}}
\newcommand{\area}{\operatorname{area}}
\newcommand{\len}{\operatorname{length}}
\newcommand{\inj}{\operatorname{inj}}
\newcommand{\id}{{\mathop{id}}}
\newcommand{\core}{{\operatorname{core}}}
\newcommand{\PMod}{{\operatorname{PMod}}}
\title{Spacious knots}
\author{Autumn E. Kent}
\address{Department of Mathematics, University of Wisconsin, 480 Lincoln Drive, Madison, WI 53706, USA}
\author{Jessica S.~Purcell}
\address{School of Mathematical Sciences, 9 Rainforest Walk, Monash University, VIC 3800, Australia}
\begin{document}

\begin{abstract}
  We show that there exist hyperbolic knots in the 3-sphere such that the set of points of large injectivity radius in the complement take up the bulk of the volume. More precisely, given a finite volume hyperbolic manifold, for any bound $R>0$ on injectivity radius, consider the set of points with injectivity radius at least $R$; we call this the $R$-thick part of the manifold. We show that for any $\epsilon>0$, there exists a knot $K$ in the 3-sphere so that the ratio of the volume of the $R$-thick part of the knot complement to the volume of the knot complement is at least $1-\epsilon$. As $R$ approaches infinity, and as $\epsilon$ approaches $0$, this gives a sequence of knots that is said to Benjamini--Schramm converge to hyperbolic space. This answers a question of Brock and Dunfield.
\end{abstract}

\maketitle

\section{Introduction}\label{Sec:Intro}

Many knots in the 3-sphere have complement admitting a hyperbolic structure \cite{thurston:bulletin}, and by Mostow--Prasad rigidity \cite{mostow, prasad} and the Gordon--Luecke theorem \cite{GordonLuecke}, the structure is a complete knot invariant. However, it is still not well understood what properties of the hyperbolic metric distinguish knot complements from other hyperbolic 3-manifolds.

In \cite{PurcellSouto}, properties of hyperbolic knot complements are investigated by studying geometric limits. If a manifold $M$ is a geometric limit of knot complements, then there exist hyperbolic knots whose geometric properties are very close to those of $M$.
In \cite{PurcellSouto}, it is shown that any one-ended hyperbolic 3-manifold with finitely generated fundamental group that embeds in $\SSS^3$ is a geometric limit of hyperbolic knot complements. The class of such 3-manifolds is surprisingly broad, and includes hyperbolic 3-space $\HH^3$ itself. However, in \cite{KentSouto}, compact submanifolds of $\SSS^3$ are presented whose interiors cannot be homeomorphic to any geometric limit of hyperbolic knot complements. Thus although the class of geometric limits of knot complements is large, there are geometric and topological restrictions on the manifolds that appear. These are not well understood.

This paper continues the investigation of limits of hyperbolic knot complements, particularly those converging to $\HH^3$.
Because $\HH^3$ is a geometric limit of knot complements, for any $R>0$ there exists a hyperbolic knot $K$ and a point $x\in \SSS^3-K$ with injectivity radius $\inj_x(\SSS^3-K)$ at least $R$. Since the injectivity radius of points in a tubular neighborhood of $K$ can be arbitrarily small, this implies that in a sequence $\SSS^3-K_n$ converging to $\HH^3$, the cusp $N(K_n)$ must be pushed further and further outside larger and larger balls in $\SSS^3-K_n$.

For $M$ any hyperbolic 3-manifold, recall that the \emph{$R$-thick part} of $M$ is defined to be $M^{\geq R} = \{x\in M \mid \inj_x(M)\geq R\}$, and its complement, denoted $M^{<R}$ is the \emph{$R$-thin part}. The result of \cite{PurcellSouto} implies that for any $R$, there exists a knot $K$ whose $R$-thick part is nonempty. However, that paper does not consider the size of the set of all such points.

Brock and Dunfield \cite{BrockDunfield} recently asked: For any $R$, does there exist a sequence of knots for which the proportion of the volume of the $R$-thin part tends to zero? That is, for fixed $R$, it is known that there are knots for which the $R$-thick part is non-empty. But does there exist a sequence of knots for which the $R$-thick part takes up larger and larger proportions of the volume? This is called \emph{Benjamini--Schramm convergence to $\HH^3$}.

In this paper, we answer Brock and Dunfield's question in the affirmative.

\begin{theorem}\label{Thm:BSCknots}
Given $R>0$ and $\epsilon>0$, there is a knot $K\subset \SSS^3$ such that $M=\SSS^3-K$ is hyperbolic and
\begin{equation}\label{VolumeRatio}
  \frac{\vol(\{x\in M \mid \inj_x(M)\geq R\})}{\vol(M)}  =
  \frac{\vol(M^{\geq R})}{\vol(M)} > 1-\epsilon.
\end{equation}
\end{theorem}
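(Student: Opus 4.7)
My plan is to realize the theorem as a consequence of Benjamini--Schramm convergence of knot complements to $\HH^3$: construct a sequence of hyperbolic knots $K_n$ such that $\vol((\SSS^3-K_n)^{\geq R})/\vol(\SSS^3-K_n)\to 1$ for every fixed $R$; any sufficiently late term then satisfies the statement.

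The broad strategy breaks into three phases. First, I would exhibit a finite-volume, one-ended hyperbolic 3-manifold $N$ whose $R$-thick part occupies more than $1-\epsilon/2$ of its total volume, and which embeds in $\SSS^3$ as an open one-ended submanifold. A natural candidate arises from a Benjamini--Schramm-convergent sequence of closed hyperbolic 3-manifolds (for instance, congruence covers of a fixed arithmetic 3-manifold, using the work of Abert--Bergeron--Biringer--Gelander--Nikolov--Raimbault--Samet), from which one drills a short closed geodesic to obtain a one-cusped manifold. The drilling theorem of Brock--Bromberg ensures only a small perturbation of the thick/thin ratio. The nontrivial topological step is arranging that the resulting one-cusped manifold admits an embedding into $\SSS^3$, since this is an additional constraint not satisfied by every cusped manifold.

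Second, I would invoke the Purcell--Souto theorem cited in the introduction to realize $N$ as a geometric limit of hyperbolic knot complements $\SSS^3-K_m$. By the definition of geometric limit, for any $R'>R$ the $R'$-thick part of a large compact core of $N$ embeds $(1+o(1))$-bi-Lipschitz-ly into the $R$-thick part of $\SSS^3-K_m$ for $m$ large. Third, I would promote this geometric convergence to convergence of the \emph{ratio} of thick to total volume, by establishing $\vol(\SSS^3-K_m)\to\vol(N)$; the desired volume ratio in $\SSS^3-K_m$ is then forced to exceed $1-\epsilon$.

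The main obstacle is exactly this final volume-comparison step: geometric convergence of cusped hyperbolic 3-manifolds does not by itself control total volumes, and the Purcell--Souto knot complements may a priori acquire extra volume from thin regions—Margulis tubes around short geodesics introduced by Dehn filling, or additional cusp regions—lying outside the convergent compact core. I expect the argument will require a refinement of the Purcell--Souto construction with explicit control of the Dehn filling slopes, combined with Thurston's volume inequality for Dehn filling to bound $\vol(\SSS^3-K_m)$ above by $\vol(N)+o(1)$, together with a careful analysis ruling out unwanted short geodesics in $\SSS^3-K_m$.
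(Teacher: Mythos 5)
Your proposal takes a genuinely different route from the paper, but it has two substantial gaps that you partially acknowledge, and the second one is fatal to the plan as stated.

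First, the source manifold $N$. Drilling a short geodesic from a Benjamini--Schramm convergent sequence of closed arithmetic manifolds produces finite-volume cusped manifolds with the right thick/thin ratio, but there is no reason these should embed in $\SSS^3$; that is a very restrictive topological condition (the Kent--Souto obstruction cited in the introduction is a symptom of how delicate this is). Even granting an embedding, a one-cusped finite-volume $N$ that embeds in $\SSS^3$ has a compact core with torus boundary sitting inside $\SSS^3$, which is already close to forcing $N$ to be a knot complement in a knotted solid torus --- so you have not made the problem strictly easier.

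Second, and more seriously, the passage from geometric convergence to a volume-ratio statement does not work. The Purcell--Souto theorem gives pointed geometric convergence $\SSS^3 - K_m \to N$, which only controls the metric on larger and larger balls around the basepoint. It gives no control on the total volume of $\SSS^3 - K_m$, nor on short geodesics or thin regions lying outside the convergent compact core; in their construction the volumes do not converge to $\vol(N)$, and the extra volume can a priori be arbitrarily thin. You flag this and propose to repair it with Thurston's volume monotonicity under Dehn filling, but that only pushes the problem up a level: you would need the parent link complements $L_m$ being filled to have $\vol(L_m) \to \vol(N)$, which is exactly the volume control you lack. The paper's actual proof avoids both gaps by construction: it builds the knot complement directly out of five geometric pieces, three of fixed hyperbolic volume (two handlebody tangle complements and a middle product piece, made totally geodesic with short pants decomposition via an extension of Kent's result, \reflem{ExtendPAMS}), and two product pieces that are convex cores $\core(S(P, f^r(P)))$ and $\core(S(f^r(P), P))$ whose volumes grow with $r$ and whose geometry approaches that of a carefully chosen mapping torus $M_f$ with large $R$-volume ratio (\refprop{ThickMappingTorus}). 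Because the fixed pieces contribute bounded volume while the product pieces dominate and are mostly $R$-thick, the volume ratio is controlled by construction; \reflem{ShortPantsBigMeridians} together with the Hodgson--Kerckhoff and Brock--Bromberg theorems then lets one Dehn fill the horizontal cusps to a knot while perturbing the thick part by at most $(1+\epsilon/100)$-bilipschitz. This explicit assembly is precisely the ``refinement of the Purcell--Souto construction'' that you predicted would be needed, and it is the bulk of the paper.
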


The quantity $\vol(M^{\geq R})/\vol(M)$ will be called the \emph{$R$-volume ratio}.

The proof of \refthm{BSCknots} is a modification of Brock and Dunfield's \cite{BrockDunfield} proof of the existence of closed integral homology spheres Benjamini--Schramm converging to $\HH^3$, with added ingredients from \cite{KentPAMS} and \cite{Kent:Skinning}. Performing $1/n$-surgeries on the knots of \refthm{BSCknots} produces new examples of homology spheres Benjamini--Schramm converging to $\HH^3$.

\subsection{Organization and proof outline}

The paper is organized as follows. In \refsec{MappingTorus}, we build a mapping torus whose $R$-volume ratio satisfies the result of \refthm{BSCknots}. However, there is no guarantee that the mapping torus will be a knot in $\SSS^3$. To obtain a knot in $\SSS^3$, we will put the mapping torus into a topological construction, which is described in \refsec{Topological}. The construction consists of decomposing $\SSS^3$ into five pieces, two of which will be replaced by product manifolds coming from the mapping torus of \refsec{MappingTorus}. For these two pieces, we may take the $R$-volume ratio to be as large as we like, and we may take the products to have total volume as large as we like. The geometry on the remaining three of the five pieces is described in \refsec{GeometricTangles}. The geometry of these three pieces will be fixed once and for all. Thus by taking larger and larger product manifolds for the other two pieces, the proportion of volume coming from the three fixed pieces will become negligible. We describe all this in \refsec{GeometricKnots}, putting the pieces together to ensure the result is a knot in $\SSS^3$; this finishes the proof of \refthm{BSCknots}.

\subsection{Acknowledgements}
The first author was supported by an NSF CAREER grant DMS-1350075. 
The second author was supported by NSF grant DMS-1252687 and ARC grant DP160103085. 
Both authors were supported by von Neumann Fellowships at the Institute for Advanced Study: 
This material is based upon work supported by the National Science Foundation under agreement No. DMS-1128155. Any opinions, findings and conclusions or recommendations expressed in this material are those of the authors and do not necessarily reflect the views of the National Science Foundation.  The authors thank the referee for many useful comments that greatly improved the exposition.

\section{Thick mapping torus}\label{Sec:MappingTorus}

As in \cite{BrockDunfield}, we first build mapping tori satisfying \eqref{VolumeRatio}. 

Given a surface $S$ and a homeomorphism $f\from S\to S$, there is an associated mapping torus
\[ 
M_f = (S\times [-1,1]) / (x,-1) \sim (f(x), 1).
\]

The goal of this section is to prove the following. 

\begin{proposition}\label{Prop:ThickMappingTorus}
For any $R>0$ and any $\epsilon>0$, there is a closed surface $\Sigma$ of genus $g=g(R,\epsilon)$, an even number of points $\{p_1, \dots, p_{2k}\}\subset \Sigma$, and a pseudo-Anosov map $f$ on the punctured surface $S = \Sigma-\{p_1, \dots, p_{2k}\}$ such that the $R$-volume ratio of $M_f$ satisfies
\[ 
\frac{\vol(M_f^{\geq R})}{\vol(M_f)} > 1-\epsilon,
\]
and $f$ extends to a homeomorphism $\Sigma \to \Sigma$ taking each $p_i$ to itself. 
\end{proposition}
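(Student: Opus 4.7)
My plan, following \cite{BrockDunfield}, is to realize $M_f$ as a deep fiber-preserving finite cover of a fixed hyperbolic fibered 3-manifold. First I would fix a hyperbolic 3-manifold $M_0$ fibering over $S^1$ with fiber $S_0 = \Sigma_0 - Q_0$ a punctured closed surface and pseudo-Anosov monodromy $f_0$ that extends to $\Sigma_0$ fixing each puncture; the figure-eight knot complement is an immediate example, with finite covers providing flexibility in the number of cusps. Using residual finiteness of $\pi_1(M_0)$, I then choose a nested sequence of finite-index subgroups $\Gamma_n \leq \pi_1(M_0)$ with $\bigcap_n \Gamma_n = \{1\}$, each $\Gamma_n$ mapping to a nontrivial subgroup of $\pi_1(M_0)/\pi_1(S_0) \isom \ZZ$. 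The corresponding covers $M_n \to M_0$ are then themselves mapping tori: the fiber $\Sigma_n - Q_n$ is a finite cover of $S_0$, and the monodromy $f_n$ is a lift of a power of $f_0$.

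The next step is to show that $\vol(M_n^{\geq R})/\vol(M_n) \to 1$ for every $R>0$. The key inputs are: (i) every closed geodesic of $M_n$ projects to a closed geodesic of $M_0$ with positive integer multiplicity, and the condition $\bigcap \Gamma_n = \{1\}$ forces this multiplicity to diverge for each fixed base geodesic, so the systole of $M_n$ tends to infinity, emptying the thin part coming from short geodesics; (ii) each cusp torus of $M_n$ finitely covers one of $M_0$ and its systole diverges similarly, so a standard horospherical computation shows the $R$-thin collar of each cusp occupies proportion $O(1/\ell_{\min}^2)$ of its cusp volume, summing to a vanishing fraction of $\vol(M_n)$. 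After establishing this Benjamini--Schramm-type convergence, I replace $f_n$ by a suitable power so that it fixes each puncture of $\Sigma_n$ individually (a finite cyclic cover, which preserves the convergence), and if necessary pass to a further double cover to ensure the puncture count is even. For the given $R,\epsilon>0$, taking $n$ large enough then yields the required inequality.

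The main obstacle is the Benjamini--Schramm convergence itself in the cusped setting: cusp $R$-thin collars could in principle dominate the volume if the cusp tori do not fatten fast enough in the tower. Residual finiteness of $\pi_1(M_0)$ gives qualitative divergence of cusp systoles, but coordinating this quantitatively while simultaneously maintaining the mapping-torus structure and puncture-fixing at every stage requires some care in the choice of the $\Gamma_n$. The overall strategy is essentially the one used by Brock and Dunfield for their closed-manifold construction, adapted to the cusped fibered setting needed here.
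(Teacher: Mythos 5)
Your proposal follows the same broad strategy as the paper: start with a hyperbolic fibered manifold, use residual finiteness of its fundamental group to pass to finite covers (which remain mapping tori) that increase the systole and deepen the cusps, and combine this with a horospherical volume computation to show the thin part vanishes in proportion. The organization is slightly different: you take a nested tower $\Gamma_n$ with trivial intersection and argue a limit, while the paper (Lemma 2.3) tailors a \emph{single} cover to a chosen parameter $Q>R$ by killing the finitely many conjugacy classes of geodesics of length $\leq 2Q$ and then the finitely many short Euclidean loops on the cusp tori, which gives the cleaner statement ``the $Q$-thin part is exactly a horoball neighborhood of the cusps.'' The paper then does the $\sinh$-computation explicitly: choosing $Q$ so that $\sinh(R/2)/\sinh(Q/2) < \epsilon$ yields $\vol(\mathbf{T}_R)/\vol(\mathbf{T}_Q) = \sinh^2(R/2)/\sinh^2(Q/2) < \epsilon^2$ in each cusp, which is precisely your heuristic $O(1/\ell_{\min}^2)$ made quantitative. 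Your identification of the cusp part as the delicate step is correct, and your resolution (nested tower forces cusp-lattice systoles to diverge) is sound, though the paper's surgical one-cover approach avoids the limit argument entirely. One place where you are vaguer than you should be is the parity fix at the end: ``pass to a further double cover'' is not automatic, since a generic double cover could turn an even number of punctures odd or cause the lifted monodromy to swap lifted punctures. The paper handles this with a specific $\ZZ/2\ZZ$ homomorphism on $H_1$ that connects exactly two chosen punctures and doubles the rest, keeping the count even and the punctures fixed. So: same approach at heart, with the paper being more explicit about the quantitative estimate and the parity bookkeeping.
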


Recall that a \emph{cusp} $\mathbf{T}$ of a hyperbolic 3-manifold $M$ is a submanifold with a lift to the universal cover $\HH^3$ that is a horoball $H$. The subgroup of $\pi_1(M) \leq \PSL(2,\CC)$ fixing $H$ is either isometric to $\ZZ$, generated by a single parabolic element of $\PSL(2,\CC)$, or is isometric to $\ZZ\oplus\ZZ$, generated by two parabolic elements. In the first case, the cusp $\mathbf{T}\cong H/\ZZ$ is homeomorphic to the product of $\RR$ and an annulus; we call this a \emph{rank-1 cusp}. In the second case, the cusp $\mathbf{T}\cong H/\ZZ\oplus\ZZ$ is homeomorphic to the product of $\RR$ and a torus; we call this a \emph{rank-2 cusp}. In the rank-1 case, the boundary $\bdy\mathbf{T}$ is naturally a flat annulus; in the rank-2 case it is naturally a flat torus. In either case, if the quotient of a horoball embeds in a neighborhood of the cusp, we say the quotient of the horoball is a \emph{horoball neighborhood} of the cusp. We also use this notation and terminology for embedded disjoint unions of such neighborhoods. 
Finally, notice that for any $R>0$, any cusp will have a subset lying in the $R$-thin part. 

Now, $M_f$ obtained in \refprop{ThickMappingTorus} has rank-2 cusps corresponding to
\[ \{p_1,\dots,p_{2k}\}\times[-1,1]/\sim,\]
and so $\vol(M_f^{<R})>0$. To make the $R$-volume ratio large, we will ensure that the $R$-thin part lies only in the cusps and takes up a small proportion of the cusp volume.

\begin{lemma}\label{Lem:Cover}
For any $Q>0$, there is a surface $S$ of genus $g = g(Q)$ with an even number of punctures, and a pseudo-Anosov map $\varphi \from S \to S$ such that the $Q$-thin part $M_\varphi^{<Q}$ of $M_\varphi$ consists only of disjointly embedded horoball neighborhoods of the cusps of $M_\varphi$ corresponding to the punctures of $S$, and $\varphi$ fixes each puncture of $S$.
\end{lemma}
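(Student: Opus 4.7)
The plan is to start with a mapping torus $M_{f_0}$ already satisfying the puncture conditions and then pass to a finite cover which is itself a mapping torus of a pseudo-Anosov (on a cover of $S_0$) whose $Q$-thin part lies entirely in horoball cusp neighborhoods. Take any pseudo-Anosov $f_0\from S_0\to S_0$ on a punctured surface with $M_{f_0}$ hyperbolic; replacing $f_0$ by a power, we may assume it fixes each puncture of $S_0$, and passing to a connected $f_0$-invariant double cover of $S_0$ if necessary we may assume $S_0$ has an even number of punctures (any $f_0$-invariant double cover of a punctured surface has an even number of punctures because the meridians sum to zero in $H_1(S_0;\ZZ/2)$). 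By the Margulis lemma, the $Q$-thin part of $M_{f_0}$ outside its cusps is a disjoint union of Margulis tubes around finitely many primitive closed geodesics $\gamma_1,\dots,\gamma_n$ of lengths $\ell_1,\dots,\ell_n\leq 2Q$.

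To kill these geodesics I would use residual finiteness of $\pi_1(M_{f_0})$, which is a finitely generated subgroup of $\PSL(2,\CC)$ and hence linear. For each pair $(i,k)$ with $k\ell_i\leq 2Q$, choose a finite-index subgroup of $\pi_1(M_{f_0})$ avoiding $\gamma_i^k$, and let $\Gamma_0$ be the intersection of all of these. In the corresponding cover $\widetilde M_0\to M_{f_0}$, each $\gamma_i$ lifts to a closed geodesic of covering degree greater than $2Q/\ell_i$; consequently every closed geodesic of $\widetilde M_0$ has length exceeding $2Q$, and the $Q$-thin part of $\widetilde M_0$ is contained in horoball neighborhoods of its cusps.

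To preserve the mapping-torus structure, let $\pi\from\pi_1(M_{f_0})\twoheadrightarrow\ZZ$ be the epimorphism coming from the fibration $M_{f_0}\to S^1$, with kernel $\pi_1(S_0)$, and let $t\in\pi_1(M_{f_0})$ generate the flow direction. Let $n\geq 1$ be the smallest integer with $t^n\in\Gamma_0$, and choose an $(f_0)_*$-invariant finite-index normal subgroup $N\leq\pi_1(S_0)$ contained in $\Gamma_0\cap\pi_1(S_0)$ (for instance a characteristic finite-index subgroup of $\pi_1(S_0)$ inside $\Gamma_0\cap\pi_1(S_0)$). Then $\Gamma=\langle N,t^n\rangle$ has finite index in $\pi_1(M_{f_0})$ and is contained in $\Gamma_0$, and the cover $\widetilde M=\widetilde{M_{f_0}}/\Gamma$ is a mapping torus $M_\varphi$ of a lift $\varphi\from S\to S$ of $f_0^n$, where $S\to S_0$ is the connected cover associated to $N$. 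The map $\varphi$ is pseudo-Anosov as a lift of one, and the $Q$-thin part of $\widetilde M$ remains confined to horoball cusp neighborhoods because $\widetilde M\to\widetilde M_0$ is itself a cover. A final adjustment---pass to one further $\varphi$-invariant double cover if $S$ has an odd number of punctures, and replace $\varphi$ by a sufficiently large power---ensures $S$ has an even number of punctures each of which is fixed by $\varphi$, while preserving the preceding properties.

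The main obstacle is satisfying all of these conditions in a single finite cover. Residual finiteness handles the short-geodesics side; imposing the form $\langle N,t^n\rangle$ with $N$ an $(f_0)_*$-invariant normal subgroup of $\pi_1(S_0)$ keeps the cover a connected mapping torus of a pseudo-Anosov; and a power of the monodromy together with a final double cover takes care of the puncture parity and the requirement that each puncture be fixed.
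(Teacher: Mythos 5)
Your approach to the closed geodesics (finiteness of short geodesics, residual finiteness to pass to a cover avoiding them, and the careful construction of $\Gamma = \langle N, t^n\rangle$ to retain the mapping-torus structure) matches the paper's first step and is correct. But there is a genuine gap afterward: you stop at the claim that, once closed geodesics of length $\leq 2Q$ have been eliminated, ``the $Q$-thin part of $\widetilde M_0$ is contained in horoball neighborhoods of its cusps.'' That is both weaker than the lemma's conclusion and not justified by what you have done. Eliminating short closed geodesics only guarantees the $Q$-thin part comes entirely from parabolics; for each parabolic fixed point $p$ the set where displacement is $<2Q$ is a horoball $B_p$, but nothing so far prevents the projection of $B_p$ from failing to be embedded, or from meeting the projection of $B_{p'}$ for a different cusp. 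Since $Q$ is allowed to be much larger than the Margulis constant, these horoballs can be large, and they can spill past the maximal embedded cusp neighborhood. In that case the $Q$-thin part is neither a horoball neighborhood (in the paper's sense, which requires embeddedness) nor disjoint across cusps.

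The paper handles this with a second application of residual finiteness, which your proposal omits: fix an embedded horoball neighborhood $\mathbf{T}$ of the cusps of the intermediate cover, list the Euclidean geodesic loops on $\partial\mathbf{T}$ of length $<2Q$ (finitely many conjugacy classes of parabolics $[\delta_j]$), and pass to a further finite cover avoiding all $\delta_j$. In that cover every parabolic has Euclidean translation length $\geq 2Q$ at the level of the lifted $\mathbf{T}$, so each $Q$-thin horoball $B_p$ sits strictly inside the corresponding component of the lifted $\mathbf{T}$, which is embedded with pairwise disjoint components; this is exactly what yields the ``disjointly embedded horoball neighborhoods'' conclusion. You should add this step (and fold it into the same intersection of finite-index subgroups you already build, or apply it after the geodesic-killing cover, before arranging the mapping-torus form). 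A minor additional point: describing the non-cuspidal $Q$-thin part as ``Margulis tubes'' is only literally valid when $Q$ is below the Margulis constant; what you actually need, and do use, is just the finiteness of the set of closed geodesics of length $\leq 2Q$. Your observation that \emph{any} double cover of a punctured surface has an even number of punctures (since the meridians sum to zero mod $2$) is a nice simplification of the paper's more explicit construction and is fine.
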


\begin{proof}
Let $f$ be any pseudo-Anosov homeomorphism on a punctured surface $\Sigma$ with more than one puncture. The mapping torus $M_f$ is hyperbolic \cite{Otal}, and the number of closed geodesics in $M_f$ of length at most $2Q$ is finite. These correspond to conjugacy classes $[\gamma_i]$ of elements of $\pi_1(M_f)$. Since $\pi_1(M_f)$ is residually finite \cite{malcev}, there is a finite-index normal subgroup of $\pi_1(M_f)$ that does not contain any $\gamma_i$. Let $N$ be the corresponding finite cover of $M_f$. Its shortest geodesic has length at least $2Q$. Moreover, the fibration of $M_f$ over $\mathbb{S}^1$ lifts to a fibration of $N$. 
 
Let $\mathbf{T}$ be a horoball neighborhood of the cusps of $N$. Let $[\delta_j]$ be the conjugacy classes of elements of $\pi_1(N)$ corresponding to the Euclidean geodesic loops on $\partial \mathbf{T}$ whose length is less than $2Q$.  Residual finiteness gives us a finite-index normal subgroup of $\pi_1(N)$ that contains none of the $\delta_j$, and passing to the corresponding finite cover of $N$ produces a fibered manifold $N'$ whose $Q$-thin part consists only of disjointly embedded horoball neighborhoods of the cusps.

The monodromy of the fibration $F\to N' \to \SSS^1$ is a homeomorphism of the punctured surface $F$ that permutes the punctures of $F$, so a finite power takes each puncture to itself. Let $N''$ be the corresponding finite cover of $N'$, so the monodromy $\mu$ of the fibration $F\to N''\to \SSS^1$ extends to a homeomorphism fixing all the punctures of $F$. 

It may be the case that the fiber of $N''$ has an odd number of punctures (even if $\Sigma$ were evenly punctured).
If so, we pass to a further cover as follows.
Let $\alpha$ and $\beta$ be elements of $H_1(N'';\ZZ)$ corresponding to distinct punctures of $F$. Let $\theta \from H_1(N'';\ZZ) \to \ZZ/2\ZZ$ be the homomorphism taking $\alpha$ and $\beta$ to $1$ and killing the other generators of $H_1(N'')$. 
Let $N'''$ be the $2$--fold cover of $N''$ corresponding to $\ker(\theta)$.
By construction of $\theta$, the lifts of $\alpha$ and $\beta$ are connected, and the lift of any other curve representing a different puncture has two components.
It follows that the fiber $S$ of $N'''$ has an even number of punctures. 
The monodromy $\varphi$ of the fibration $S \to N''' \to \mathbb{S}^1$ fixes the punctures of $S$ by construction, and so $N''' = M_\varphi$ is the desired manifold.
\end{proof}

\begin{proof}[Proof of \refprop{ThickMappingTorus}]
Let $R>0$ and $1 > \epsilon > 0$.
Recall that the function $\sinh(x)$ is increasing and unbounded. Thus for fixed $R$ and fixed $\epsilon$, we may choose $Q>R$ large enough that $\sinh(R/2)/\sinh(Q/2) < \epsilon$. 

Now, \reflem{Cover}, applied to $Q$, gives a mapping torus $M_\varphi$ with $Q$-thin part a horoball neighborhood of the cusps.
Since $Q>R$, the thin parts satisfy $M_\varphi^{<R} \subset M_\varphi^{<Q}$ and so the $R$-thin part is also a horoball neighborhood of the cusps. 

Let $\mathbf{T}_Q$ be a component of $\overline{M_\varphi^{<Q}}$, with $\mathbf{T}_R\subset \mathbf{T}_Q$ a component of $\overline{M_\varphi^{<R}}$.
We first prove that we have chosen $Q$ such that
\[ \frac{\vol(\mathbf{T}_R)}{\vol(\mathbf{T}_Q)} < \epsilon^2. \]
This can be seen as follows. 

Consider the universal cover $\mathbf{B}_Q$ of $\mathbf{T}_Q$ as a horoball centered at infinity in the upper half-space model; similarly $\mathbf{B}_R\subset \mathbf{B}_Q$ is a horoball. Thus in coordinates, the boundary
$\bdy\mathbf{B}_R = \{(x,y,h_R)\mid x,y\in\RR\}$ is a horosphere with height (i.e.\ third coordinate in $\HH^3$) some constant $h_R>0$. Similarly, $\bdy\mathbf{B}_Q$ is a horosphere with height $h_Q>0$. Because $\mathbf{T}_R\subset\mathbf{T}_Q$, it follows that the height $h_Q < h_R$.
Also, because $\bdy\mathbf{T}_Q$ and $\bdy\mathbf{T}_R$ have the same Euclidean shape (ignoring scale), the ratio
\[ \frac{\vol(\mathbf{T}_R)}{\vol(\mathbf{T}_Q)} = \frac{\area(\bdy\mathbf{T}_R)}{\area(\bdy\mathbf{T}_Q)} = \frac{h^2_Q}{h_R^2}. \]

Let $\Delta$ be the $\ZZ \oplus \ZZ$ group of parabolics such that $\mathbf{B}_Q / \Delta = \mathbf{T}_Q$. The shortest hyperbolic translation distance of $\Delta$ acting on $\partial \mathbf{B}_R$ is $R$, and on $\bdy\mathbf{B}_Q$ is $Q$. A calculation shows that the shortest Euclidean translation distance of $\Delta$ acting on $\bdy \mathbf{B}_R$ is then $2\sinh(R/2)$, and acting on $\bdy\mathbf{B}_Q$ is $2\sinh(Q/2)$. On the other hand, in coordinates the element of $\Delta$ translating the shortest distance takes some point $(x,y,h_R)$ to $(x',y',h_R)$; thus its Euclidean distance is \[1/h_R\cdot \sqrt{(x-x')^2 + (y-y')^2} = 2\sinh(R/2).\]
Similarly it takes $(x,y,h_Q)$ to $(x',y',h_Q)$, and so
\[1/h_Q\cdot \sqrt{(x-x')^2 + (y-y')^2} = 2\sinh(Q/2).\]
It follows that $h_Q/h_R = \sinh(R/2)/\sinh(Q/2)$. 

Thus, by our choice of $Q$, we have
\[ \frac{\vol(\mathbf{T}_R)}{\vol(\mathbf{T}_Q)} = \frac{h^2_Q}{h_R^2} =
\frac{\sinh^2(R/2)}{\sinh^2(Q/2)} < \epsilon^2. \]

Since this holds for each component of $M_\varphi^{<R}$, choosing $Q$ in this way guarantees that 
\[
\frac{\vol(M_\varphi^{<R})}{\vol(M_\varphi^{<Q})} < \epsilon^2.
\]

Then
\[ \frac{\vol(M_\varphi^{\geq R})}{\vol(M_\varphi)} = 1 - \frac{\vol(M_\varphi^{<R})}{\vol(M_\varphi^{\geq Q} + M_\varphi^{<Q})} >
1-\frac{\vol(M_\varphi^{<R})}{\vol(M_\varphi^{<Q})} > 1-\epsilon^2. \qedhere
\]
\end{proof}

\section{The topological construction}\label{Sec:Topological}

Our knot complements will be built from several geometric pieces.

Let $R, \epsilon > 0$ and let $g(R,\epsilon)$ be as in \refprop{ThickMappingTorus}.
Consider the Heegaard splitting of $\SSS^3$ of genus $g(R,\epsilon)$ with Heegaard surface $\Sigma$. So 
	\[
		\SSS^3 \cong (H_1' \cup_\Sigma H_2')/ \psi,
	\]
with $H_1'$, $H_2'$ handlebodies, $\bdy H_i' \cong \Sigma$, and $\psi \from \Sigma \to \Sigma$ a homeomorphism.

Consider a regular neighborhood of $\Sigma$ in $\SSS^3$ homeomorphic to $\Sigma\times[-1,1]$ made of two pieces $M_1'' \cong \Sigma\times[-1,0]\subset H_1'$ and $M_2''\cong \Sigma\times[0,1]\subset H_2'$ glued together by $\psi \from \Sigma\times\{0\}\to \Sigma\times\{0\}$. 

Let $H_1 = H_1' - M_1''$ and $H_2 = H_2'-M_2''$. We then obtain $\SSS^3$ by gluing four pieces: $H_1$, $M_1''$, $M_2''$, and $H_2$. Denote the gluing map between $H_1$ and $M_1''$ by $i_1\from \bdy H_1 \to \Sigma\times\{-1\}$, and that between $M_2''$ and $H_2$ by $i_2\from \Sigma\times\{1\}\to\bdy H_2$.

We further split each of $M_1''$ and $M_2''$ into two pieces, as follows. Let $M_1'$ be the manifold $\Sigma\times[-1, -\half]$ and let $M_1=\Sigma\times[-\half,0]$, with gluing map $i_1'\from \Sigma\times\{-\half\} \to \Sigma\times\{-\half\}$, and let $M_2=\Sigma\times[0,\half]$ and $M_2'=\Sigma\times[\half,1]$, with gluing map $i_2'\from \Sigma\times\{\half\}\to \Sigma\times\{\half\}$.

We let $M = (M_1 \cup M_2) / \psi.$

This is all illustrated schematically in \reffig{Chunks}.

\begin{figure}
  \import{figures/}{chunks.pdf_tex}
  \caption{Gluing all pieces yields $\SSS^3$}
  \label{Fig:Chunks}
\end{figure}

\section{Geometric tangles}\label{Sec:GeometricTangles}

We build our knots by gluing together tangles in $H_1$, $M_1'$, $M$, $M_2'$, and $H_2$. The pieces $H_1$, $M$, $H_2$ and their tangles are chosen first and are then fixed throughout the construction.

Start with the Heegaard splitting of $\SSS^3$ from the previous section. 
Let $K$ be any closed curve in $\SSS^3$ meeting the Heegaard surface $\Sigma$ transversely in $2k$ points $\{p_1,\dots,p_{2k}\}$. Recall that $M_1'\cup M\cup M_2'$ is homeomorphic to $\Sigma\times[-1,1]$, a regular neighborhood of $\Sigma$ in $\SSS^3$. 
We homotope $K$ so that it meets this neighborhood in arcs of the form $p_j\times[-1,1]$. 
We will homotope $K$ relative its intersection with $\partial H_1$, $\partial M_1'$, $\partial M$, $\partial M_2'$, and $\partial H_2$ so that its intersections with each of $H_1$, $M_1'$, $M$, $M_2'$, and $H_2$ are of a particularly nice form.


The setup is as follows. Let $N$ be a compact orientable $3$-manifold. (In our applications, $N$ will correspond to $H_1$, $H_2$, or $M$, but a more general result holds.) Let $A$ be a $1$-manifold properly embedded in $N$ such that each component of $\bdy N - \bdy A$ has negative Euler characteristic. That is, $A$ is a collection of embedded simple closed curves and arcs in $N$, with each component of $\bdy A$ lying on the closed surface $\bdy N$. Thus the surface $\bdy N-\bdy A$ is a punctured surface, with punctures corresponding to points $\bdy A$, and we require that the punctured surface have negative Euler characteristic. Finally, let $P$ be a pants decomposition of the punctured surface $\bdy N - \bdy A$. 

\begin{lemma}\label{Lem:ExtendPAMS}
Fix $\delta>0$. Let $N$, $A$, and $P$ be as above. 
Then there is a $1$-manifold $B$ homotopic to $A$ relative to its endpoints such that
  \begin{enumerate}
  \item $N-B$ is hyperbolic with totally geodesic boundary,
  \item the sum of lengths of geodesic representatives of all curves of $P$ on the geodesic boundary of $N-B$ is less than $\delta$, and
  \item all arcs in $B$ correspond to rank-$1$ cusps.
  \end{enumerate}
  Moreover, the manifold $N_{B,P} = N - (B \cup P)$ admits a hyperbolic structure with totally geodesic boundary a collection of $3$-punctured spheres.
\end{lemma}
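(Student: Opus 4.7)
My plan is to prove the lemma in two stages: first, establish the ``moreover'' clause by producing a tangle $B$ whose complement $N_{B,P}$ is hyperbolic with totally geodesic $3$-punctured sphere boundary; then recover $N-B$ by hyperbolic Dehn filling along the $P$-cusps, choosing the geometry so that the pants curves appear as short geodesics on the reconstituted boundary.

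For the first stage, I would push each curve of $P$ slightly into $\interior(N)$ through the collar of $\partial N$, realising $P$ as a boundary-parallel link $P^* \subset \interior(N)$. The surface $\partial(N-(B\cup P^*))$ is then topologically a disjoint union of $3$-punctured spheres: each pants piece of $\partial N$ cut along $P$ contributes one, with its three punctures coming from some combination of points of $\partial B$ and components of $P^*$. The main theorem of \cite{KentPAMS} then supplies a $B$, homotopic to $A$ rel $\partial A$, for which $N-(B\cup P^*)$ is hyperbolic with totally geodesic boundary; the hypothesis that every component of $\partial N - \partial A$ has negative Euler characteristic supplies the topological input needed for the uniformization. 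Adams's theorem that essential $3$-punctured spheres in a cusped hyperbolic $3$-manifold are isotopic to totally geodesic ones then yields the ``moreover'' clause.

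For the second stage, each component of $P^*$ is a rank-$2$ cusp in $N_{B,P}$, and I would Dehn fill along the meridian slope that undoes the push-off. The resulting manifold is homeomorphic to $N-B$, with boundary reassembled into $\partial N - \partial B$ and the former cusp cores appearing as closed geodesics, precisely the geodesic representatives of the pants curves. Thurston's hyperbolic Dehn surgery theorem in its form for manifolds with totally geodesic boundary gives (1), while the Neumann--Zagier length estimate gives each core geodesic length of order $1/\ell^2$, where $\ell$ is the length of the meridian slope in the maximal horospherical cross-section of the corresponding $P^*$-cusp. Taking $\ell$ large forces the total pants length below $\delta$, giving (2). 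Item (3) follows because each arc of $B$ has both endpoints on $\partial N$, so its complementary rank-$1$ cusp has annular cross-section with both boundary circles on the totally geodesic $\partial N$.

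The delicate step will be guaranteeing that the meridian slopes of the $P^*$-cusps can be made as long as needed. Here I would exploit the flexibility from \cite{KentPAMS} together with the skinning bounds of \cite{Kent:Skinning}: by applying the existence result to augmentations of $A$ with additional strands running parallel to the pants curves (which are absorbed into the homotopy class rel endpoints after contracting the decorations), or equivalently by passing to a suitable finite cover of $N_{B,P}$ and descending, the $P^*$-cusps can be inflated to any prescribed depth before the Dehn filling is performed.
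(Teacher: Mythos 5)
Your plan reverses the paper's order: you want to hyperbolize $N_{B,P}$ first (the ``moreover'' clause) and then recover $N-B$ by Dehn filling, whereas the paper hyperbolizes $N-B$ with short pants curves first and obtains $N_{B,P}$ at the end by drilling the $P$-geodesics (via Kojima's theorem applied to the double). Reversing the order is not inherently wrong, but it shifts all the difficulty onto your ``delicate step,'' and that step has a genuine gap.

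The gap is this: your stage 2 needs the meridian slopes of the $P^*$-cusps in $N_{B,P}$ to have large normalized length before you can invoke hyperbolic Dehn surgery and the Neumann--Zagier estimate. But large normalized length of those meridians is, via the modulus computation in the proof of Lemma~\ref{Lem:ShortPantsBigMeridians} (which goes back to \cite{Kent:Skinning}), essentially equivalent to $P$ being short on the geodesic boundary of $N-B$ --- which is item (2), the thing you are trying to prove. So you have not sidestepped the core difficulty, you have relocated it. Neither of your two suggested mechanisms closes the gap. Augmenting $A$ by strands parallel to the pants curves changes the tangle's homotopy class and the topology of the complement, and ``contracting the decorations'' afterward is a Dehn filling, not a homotopy rel endpoints, so it does not return you to the class of $A$. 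Passing to a finite cover of $N_{B,P}$ does not inflate cusp moduli in a way you can push back down: cusp cross-sections of a cover are covers of cusp cross-sections of the base, normalized lengths of slopes do not improve in the direction you need, and there is no general way to ``descend'' a hyperbolic structure from a cover to the base. The paper's actual mechanism --- regluing along a properly embedded surface $F$ via powers $\psi^n$ of a Brunnian pseudo-Anosov lying in the intersection of the Birman kernels, then combining Thurston's bounded image theorem, Brock's length estimate for $Q(\psi^n a, b)$, and a quasi-isometry comparison from \cite{KentPAMS} --- is precisely what produces the needed length control, and your proposal contains no substitute for it.

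Two smaller points. First, your appeal to ``the main theorem of \cite{KentPAMS}'' to produce $B$ with $N-(B\cup P^*)$ hyperbolic treats as a black box exactly the extension (to multiple components, arc components, and boundary with several pieces) that the present lemma is proving; the cited theorem as stated handles a single simple closed curve in a handlebody-like $N$. Second, Adams's theorem on essential $3$-punctured spheres is not the right tool for the ``moreover'' clause: the boundary components of $N_{B,P}$ are boundary, not embedded surfaces in a cusped manifold without boundary, and the paper gets total geodesy by doubling and invoking Mostow--Prasad rigidity (for the boundary of $N-B$) and then Kojima's theorem (after drilling $P$); Adams is not needed and does not directly apply.
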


The proof is nearly identical to that of \cite[Theorem~1]{KentPAMS}. Indeed, if we restrict to the case that $N$ has boundary a single closed surface and $A$ is a single simple closed curve in $N$, then \cite[Theorem~1]{KentPAMS} implies that for any simple closed curve $C$ on $N$, there exists a curve $B$ homotopic to $A$ such that $N-B$ is hyperbolic, $\bdy(N-B)$ is totally geodesic, and $C$ is isotopic on $\bdy(N-B)$ to a geodesic with length less than $\delta$. This implies items (1) and (2) for this restricted case, with $C$ replacing our $P$. To prove the full result of the lemma, we extend the proof of \cite[Theorem~1]{KentPAMS} to allow more general compact manifolds $N$, to allow $A$ to have multiple components including arc components, and to require $P$ to be a complete pants decomposition of $\bdy N-\bdy A$. However, the proof proceeds almost exactly as in \cite{KentPAMS}; we step through it briefly.

\begin{proof}[Proof of Lemma~\ref{Lem:ExtendPAMS}]
First, choose a surface $F$ properly embedded in $N$ with boundary containing $P$; for example, in our case we may take $F$ to be isotopic to $(\bdy N-\bdy A)-P$, pushed in a bit from the boundary. We will cut $N$ along $F$ and reglue via an element $\varphi$ carefully chosen in the mapping class group of $F$. We will denote the new manifold obtained by applying this regluing by $N(\varphi)$. 

A theorem of Myers \cite[Theorem~1.1]{Myers} implies that for any $N-F \cong N(\varphi)-F$ containing embedded arcs and curves $A-F$, there exists a $1$-manifold $k''$ homotopic to $A-F$ relative to its boundary such that each component of $(N-F)-k''$ is boundary incompressible, acylindrical, and atoroidal. Reglue $N-F$ along the traces of $F$ via $\varphi$, and let $K$ be the knot in $N(\varphi)$ corresponding to $k''$. The exterior $N(\varphi)-K$ is irreducible, atoroidal, and acylindrical by \cite[Lemma~2.1]{Myers}, so its double $d(N(\varphi)-K)$ admits a hyperbolic structure by Thurston's uniformization theorem for Haken manifolds (\cite{Morgan, thurston:bulletin}). By the proof of the Mostow--Prasad rigidity theorem, $\bdy(N(\varphi)-K)$ is a totally geodesic surface in $d(N(\varphi)-K)$, since it is fixed pointwise by an involution of that double. Thus cutting the double open again yields a hyperbolic structure on $N(\varphi)-K$ with totally geodesic boundary. Note that arcs of $K$ lie in the parabolic locus of $N(\varphi)-K$, and the boundary of a neighborhood of such an arc is homeomorphic to an annulus. Thus each arc of $K$ corresponds to a rank-$1$ cusp. Thus, provided we choose $\varphi$ such that $N(\varphi)$ is homeomorphic to $N$ and the image of $K$ under the homeomorphism is homotopic to $A$, these results imply items (1) and (3) of the lemma. (For example, this will hold if $\varphi$ is the identity map $\id$, but choosing $\varphi=\id$ will not allow us to prove item (2) in general.)

To finish the proof of the lemma, we choose the map $\varphi$ and show that our choice also implies item (2).

We use maps first studied by Birman \cite{Birman}, namely elements in the intersection of all Birman kernels for the punctures of $F$. (See also, for example, \cite[Chapter~4]{FarbMargalit}.) More carefully, let $F_K = F-K$, and let $\widehat{F}_{K,i}$ be the surface obtained by attaching a disk to the $i$-th boundary component of $F_K$. Let $\PMod(F_K)$ denote the pure mapping class group of $F_K$, i.e.\ the subgroup of the mapping class group which does not permute boundary components. Then we have the Birman exact sequence
\[ 1 \to \pi_1(\widehat{F}_{K,i}) \to \PMod(F_K) \overset{\phi_i}{\longrightarrow} \PMod(\widehat{F}_{K,i})\to 1. \]
Take $\psi$ to be a nontrivial element of $\bigcap_i \ker{\phi_i}$; the existence of a nontrivial $\psi$ is shown in \cite{KentPAMS}.
Note first that the map $\psi$ is Brunnian: whenever any boundary component of $F_K$ is filled, the resulting mapping class is the identity (see, for example, \cite{Whittlesey}). It follows that $N(\psi)$ is homeomorphic to $N$, and that $K$ in $N(\psi)$ is homotopic to $A$ in $N$; indeed, this must hold for any power $\psi^n$ of $\psi$. Finally, it is also shown in \cite{KentPAMS} that $\psi$ is psuedo-Anosov acting on $F$.

Now consider the manifolds $N^n = N(\psi^n)$. By the above work, for each $n$ there exists a $1$-manifold $B_n$ such that $N^n_K := N^n-B_n$ is hyperbolic with totally geodesic boundary, $N^n$ is homeomorphic to $N$, and the image of $B_n$ under this homeomorphism is homotopic to $A$.

On the other hand, let $d(F)=S$ in the doubled manifold $d(N^n_K)$; we abuse notation slightly and denote the double of $\psi$ by $\psi$. In the hyperbolic structure on $d(N^n_K)$, the subgroup $\pi_1(S)\leq \PSL(2,\CC)$ is quasi-Fuchsian, denoted by $Q(\psi^{-n}a_n, b_n)$ for some $a_n$, $b_n$ in the Teichm\"uller space $\mathcal{T}(S)$ of $S$. Thurston's bounded image theorem (see, for example \cite{Morgan}) implies that $Q(\psi^{-n}a_n,b_n)$ lies in a compact subset $A$ of $\mathcal{T}(S)\times\mathcal{T}(S)$, with $A$ independent of $n$.

Let $(a,b)\in A \subset \mathcal{T}(S)\times\mathcal{T}(S)$ lie in the same compact set. Since $\psi$ is reducible on $S$, fixing curves $P$, but pseudo-Anosov on the two components of $S-\bdy F$, a theorem of Brock \cite[Theorem~4.5]{Brock} implies that the lengths of the geodesic representatives of $P$ tend to zero in $Q(\psi^{n}a,b)$ as $n$ tends to infinity. But by \cite[Lemma~1]{KentPAMS}, $Q(\psi^{-n}a_n,b_n)$ and $Q(\psi^{n}a,b)$ are quasi-isometric independent of $n$, so the lengths of $P$ also tend to zero in $N^n_K$. This finishes the proof of parts (1)--(3).


For the last part of the lemma, note that the complement of a disjoint union of simple geodesics in a hyperbolic $3$-manifold admits a complete hyperbolic structure \cite{Kojima}. 
So the manifold obtained by removing $P$ from the double $d(N-B)$ of $N-B$ admits a complete hyperbolic structure.
Moreover, this manifold is the double $d( N_{B,P})$ of $N_{B,P}$, and it follows that $N_{B,P}$ admits a hyperbolic structure with totally geodesic boundary a collection of $3$-punctured spheres.
\end{proof}

If $\alpha$ is a curve on the boundary $T$ of a cusp neighborhood, then the normalized length of $\alpha$ is defined to be
	\[ 
		\calL(\alpha) = \frac{\len(\alpha)}{\sqrt{\area(T)}}. 
	\]

\begin{lemma}\label{Lem:ShortPantsBigMeridians}
Let $L>0$.  Then there is $\delta = \delta(\chi(F),L) > 0$ such that the following holds.
Let $N$ be an orientable finite-volume $3$-manifold with totally geodesic boundary in which the length of a pants decomposition $P$ is less than $\delta$. 
Then $N' = N - P$ admits a finite-volume hyperbolic metric with totally geodesic boundary a collection of $3$-punctured spheres, and, whenever $N'$ is embedded isometrically into a complete hyperbolic manifold $N''$ with $\bdy N'' = \emptyset$, the normalized length of each meridian corresponding to a curve of $P$ in $N''$ is at least $L$. 
\end{lemma}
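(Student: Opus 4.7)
The plan is to split the proof into a construction step for the hyperbolic structure on $N'$ and a geometric estimate for the normalized meridian length. For the hyperbolic structure, I would use the doubling trick: let $d(N)$ be the double of $N$ across its totally geodesic boundary, a complete finite-volume hyperbolic $3$-manifold with $\bdy d(N) = \emptyset$ in which the curves of $P$ become a collection of disjoint simple closed geodesics lying on the totally geodesic surface $\bdy N \subset d(N)$. Kojima's theorem supplies a complete hyperbolic structure on $d(N) \setminus P$, and Mostow--Prasad rigidity forces the doubling involution to extend isometrically with fixed-point set $\bdy N \setminus P$, a disjoint union of thrice-punctured spheres. Cutting along this fixed-point set recovers $N' = N \setminus P$ with totally geodesic boundary of the asserted form.

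For the normalized-length bound I introduce an intrinsic cusp width $d_p$ for each $p \in P$: the cusp of $N'$ coming from $p$ is rank-$1$ with annular cross-section whose boundary circles are the maximal horocycles (each of length $2$) on the adjacent thrice-punctured spheres, and $d_p$ is the hyperbolic width of this annulus at the maximal horocycle level. Now suppose $N'$ embeds isometrically in a complete $N''$ with $\bdy N'' = \emptyset$. The cusp of $N''$ containing the cusp of $N'$ is rank-$2$, with parabolic subgroup generated by $p$ and a transverse parabolic $q$. In upper-half-space coordinates at this cusp, with $p$ a horizontal translation and all lifts of $\bdy N'$ parallel vertical half-planes, a direct computation shows the normalized length of the meridian dual to $p$ (the slope whose Dehn filling would recover $p$ as a core geodesic) is at least $\sqrt{d_p''/2}$, where $d_p''$ is the transverse hyperbolic period of the rank-$2$ parabolic lattice in $N''$. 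The key monotonicity $d_p'' \geq d_p$ follows because any parabolic transverse to $p$ must carry one lift of $\bdy N'$ to another, and the next lift of $\bdy N'$ away from a given one is at distance at least the width of an $N'$-strip.

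Finally I would relate $d_p$ to $\len_N(p)$: in $d(N')$ the doubling symmetry yields $d_p''|_{d(N')} = 2 d_p$, so the normalized meridian length in $d(N')$ equals $\sqrt{d_p}$, and Hodgson--Kerckhoff / Neumann--Zagier type Dehn-surgery estimates give a universal lower bound on the normalized meridian length of a drilled geodesic of length $\ell$ growing like $\sqrt{2\pi/\ell}$. Since $\len_{d(N)}(p) = \len_N(p)$, this forces $d_p \to \infty$ uniformly as $\len_N(p) \to 0$, and choosing $\delta$ so that $\len_N(p) < \delta$ implies $d_p \geq 2 L^2$ yields $\calL \geq L$ in every admissible $N''$. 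The main obstacle is the monotonicity $d_p'' \geq d_p$, since a priori $N''$ might introduce a short transverse period through a non-obvious identification; this reduces to the rigidity fact that lifts of $\bdy N'$ meeting the cusp form a parallel family of vertical half-planes meeting only at infinity, so any transverse parabolic in $\pi_1(N'')$ must translate between entire $N'$-strips rather than within one.
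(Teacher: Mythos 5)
The paper's proof of this lemma is a two-line citation: the normalized-length bound is obtained by applying the proof of Theorem~39 of \cite{Kent:Skinning}, which shows directly that the conformal modulus of the horospherical annulus $\mathcal{A} = \partial\mathbf{T}_0 \cap N'$ can be made as large as desired by taking $\len_{\bdy N}(P)$ small; a large modulus for $\mathcal{A}$ forces a large normalized meridian length because $\mathcal{A}$ embeds in the cusp torus of $N''$. Your proposal instead builds the estimate from scratch via an intrinsic ``cusp width'' $d_p$, proves monotonicity $d_p'' \geq d_p$ under any embedding into $N''$, reduces to the symmetric model $N'' = d(N')$ (where the reflection symmetry forces the cusp torus to be rectangular and $d_p''|_{d(N')}=2d_p$, so the normalized meridian length is exactly $\sqrt{d_p}$), and then appeals to Dehn-surgery estimates to force $d_p\to\infty$ as $\len_N(P_0)\to 0$. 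That overall architecture is sound, and the monotonicity observation via disjointness of the strip lifts of $N'$ in $\HH^3$ is correct and a nice way to pass from the symmetric double to an arbitrary ambient $N''$.

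The gap is in the last step. You assert that ``Hodgson--Kerckhoff / Neumann--Zagier type Dehn-surgery estimates give a universal lower bound on the normalized meridian length of a drilled geodesic of length $\ell$ growing like $\sqrt{2\pi/\ell}$.'' This is not an off-the-shelf statement, and as phrased it is the wrong direction of the usual estimates. Neumann--Zagier gives the asymptotic $\ell \sim 2\pi/\widehat{L}^2$ only as $\widehat{L}\to\infty$; the explicit effective results (Hodgson--Kerckhoff \cite{hk:univ}, and Futer--Kalfagianni--Purcell) give bounds of the form $\ell \leq 2\pi/(\widehat{L}^2 - c)$ \emph{once $\widehat{L}$ is already known to be large}, i.e.\ they bound $\ell$ from above in terms of $\widehat{L}$, which is an upper bound on $\widehat{L}$ in terms of $\ell$, not the lower bound you need. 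To get $\widehat{L}$ large from $\ell$ small one needs an \emph{a priori} reason that $\widehat{L}$ cannot be small while $\ell\to 0$; this can likely be extracted from a Margulis-tube-radius estimate (short core $\Rightarrow$ large tube radius in $d(N)$) together with an effective drilling theorem comparing the tube boundary torus in $d(N)$ to the cusp torus in $d(N')$, but none of that is a one-line citation and the tube-boundary-versus-horotorus comparison requires care because normalized lengths on the two tori are not literally equal. Until that step is supplied the proof does not close; the paper avoids all of this by invoking \cite[Theorem~39]{Kent:Skinning}, whose proof directly controls the modulus of $\mathcal{A}$ (via the geometry of the rigid thrice-punctured spheres bounding it) in terms of $\len_{\bdy N}(P)$, and in particular accounts for the dependence of $\delta$ on $\chi(F)$ that appears in the statement but would be absent from your argument.
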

\begin{proof}
Let $P_0$ be a curve in $P$ and let $\mathbf{T}_0$ be the corresponding cusp neighborhood in $N''$.
The \textit{meridian corresponding to $P_0$} is the filling slope on $\partial \mathbf{T}_0$ whose homology class dies under the projection homomorphism $H_1(\mathbf{T_0}) \cong \ZZ \oplus \ZZ \to \ZZ = \langle [P_0] \rangle$.

Let $\mathcal{A}$ be the horospherical annulus $\partial \mathbf{T}_0 \cap N'$.
It is shown in the proof of \cite[Theorem~39]{Kent:Skinning} that there is $\delta = \delta(\chi(F),L) > 0$ such that the conformal modulus of $\mathcal{A}$ is at least $L$ provided that the length of $P$ is less than $\delta$ in the totally geodesic boundary of $N$.
This implies that the normalized length of the meridian at $P_0$ is at least $L$.
\end{proof}

\section{Geometric knots}\label{Sec:GeometricKnots}

Our knot in $\SSS^3$ will be constructed by gluing together tangles in $H_1$, $M_1'$, $M$, $M_2'$, and $H_2$. 
The tangles in $M_1'$ and $M_2'$ will be braids provided by \refprop{ThickMappingTorus}.

For fixed $R>0$ and $\epsilon>0$, \refprop{ThickMappingTorus} provides a mapping torus $M_f$ whose $R$-volume ratio is at least $1-\epsilon$. Take the surface $\Sigma$, points $\{p_1, \dots, p_{2k}\}$,  $S=\Sigma-\{p_1,\dots,p_{2k}\}$, and $f$ as in that proposition. Let $M_f^\infty$ denote the infinite cyclic cover of $M_f$ corresponding to the fiber, and let $M_f^n$ denote the mapping torus corresponding to $f^n$. Finally, let $P$ be any pants decomposition of $S$. 

Given natural numbers $m$ and $n$, there exists a maximally cusped hyperbolic structure on $S\times\RR$ with curves $f^{-m}(P)$ corresponding to rank-$1$ cusps on one end, and $f^n(P)$ corresponding to rank-$1$ cusps on the other, by Thurston's Geometrization Theorem \cite{ThurstonII,Morgan,Otal}.
Let $S(f^{-m}(P), f^n(P))$ be this hyperbolic manifold.

The following lemma is almost identical to \cite[Proposition~3.4]{BrockDunfield}, only we allow additional cusps corresponding to the punctures in our surface. 
The lemma may be proven just as in that paper, using the fact that the Drilling Theorem applies to manifolds with additional cusps.

\begin{lemma}\label{Lem:MaxCuspsConverge}
The maximally cusped structures $S(f^{-m}(P), f^n(P))$ for $m,n>0$ converge strongly to the infinite cover $M_f^\infty$ as $m,n\to \infty$. 
The manifolds $S(P,f^n(P))$ converge strongly to a manifold $S_A$ with a degenerate end asymptotically isometric to the positive end of $M_f^\infty$ and whose convex core boundary is a surface with parabolic locus $P$. The analogous statement holds for $S(f^{-n}(P), P)$. \qed
\end{lemma}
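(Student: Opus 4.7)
The plan is to adapt the argument of \cite[Proposition~3.4]{BrockDunfield} to the punctured-fiber setting, using the fact that the relevant convergence theorems for Kleinian surface groups and the Drilling Theorem of Brock--Bromberg hold in the presence of arbitrary parabolic locus. Since $f$ is pseudo-Anosov on $S$, it has stable and unstable measured laminations $\lambda^\pm$, and $f^n(P)\to \lambda^+$ and $f^{-m}(P)\to \lambda^-$ projectively as $m,n\to\infty$ by standard Thurston dynamics on the space of measured laminations.

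First I would replace the maximally cusped structures by quasi-Fuchsian approximations. Fix $a,b$ in the Teichm\"uller space of $S$, regarded as a surface with punctures so that the punctures yield rank-$1$ cusps in the associated Kleinian manifolds. By Thurston's double limit theorem applied in the punctured setting, the manifolds $Q(f^{-m}a,\,f^n b)$ subconverge algebraically to a Kleinian surface group whose ending laminations are $\lambda^-$ and $\lambda^+$; by the Ending Lamination Theorem this limit is isometric to $M_f^\infty$. Strong convergence upgrades algebraic convergence because $\lambda^\pm$ are minimal and filling on $S$, so no accidental parabolics develop beyond those already accounted for by the punctures.

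To pass from the quasi-Fuchsian approximations to the maximally cusped structures $S(f^{-m}(P),\,f^n(P))$, I would invoke the Drilling Theorem. Along a suitable approach to the maximal cusp, the geodesic representatives of $f^{-m}(P)$ and $f^n(P)$ become arbitrarily short in the quasi-Fuchsian approximations, and drilling them produces a bi-Lipschitz perturbation supported in small Margulis tubes. Taking $m,n\to\infty$, the drilled curves $f^{-m}(P)$ and $f^n(P)$ exit every compact subset of $M_f^\infty$, so in the limit there is nothing left to drill and one recovers $M_f^\infty$ itself. The additional rank-$2$ cusps of the punctures cause no trouble since both Thurston's theorem and the Drilling Theorem are insensitive to the presence of extra parabolic locus. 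For the second claim, I would run the same argument one-sidedly: fix $P$ on the negative side and let $n\to\infty$ on the positive side, so that the Bers slice $Q(a,\,f^n b)$ limits to a singly degenerate structure with ending lamination $\lambda^+$, isometric at infinity to the positive end of $M_f^\infty$, while on the negative side the geodesic representatives of $P$ pinch to parabolics, producing a geometrically finite convex core boundary with parabolic locus $P$.

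The main obstacle will be upgrading algebraic to strong (geometric) convergence, which requires ruling out the appearance of parabolics not accounted for by the pinched curves $P$, $f^{-m}(P)$, $f^n(P)$ and the punctures of $S$. In the closed case of \cite{BrockDunfield} this is handled by Thurston's covering theorem together with the non-realizability of the ending laminations; the same reasoning goes through in our setting because $\lambda^\pm$ are minimal and filling on $S$ and in particular are not supported in any neighborhood of the punctures.
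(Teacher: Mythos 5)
Your proposal follows essentially the same route as the paper, which simply defers to Brock--Dunfield's Proposition~3.4 and observes that the Drilling Theorem accommodates the extra cusps coming from the punctures; you correctly identify the projective convergence $f^{\pm n}(P)\to\lambda^{\pm}$, the double limit theorem, the ending lamination theorem, and the tolerance of the Drilling Theorem for additional parabolic locus as the needed ingredients. One small imprecision worth tightening: drilling a short geodesic out of a quasi-Fuchsian manifold $Q(X,Y)$ near the maximal-cusp boundary produces a rank-$2$ cusp along that curve, whereas the maximally cusped structure $S(f^{-m}(P),f^n(P))$ has a rank-$1$ accidental-parabolic cusp there, so the two are not literally related by drilling; the cleaner statement is that $S(f^{-m}(P),f^n(P))$ is itself the strong limit of quasi-Fuchsian manifolds $Q(X_t,Y_t)$ as $X_t,Y_t$ pinch along $f^{-m}(P)$ and $f^n(P)$, from which closeness of thick parts follows by the definition of geometric convergence, and the Drilling Theorem is reserved for its intended role of handling the fixed cusps at the punctures of $S$.
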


We now prove \refthm{BSCknots}.

\begin{proof}[Proof of \refthm{BSCknots}]
For fixed $R$ and $\epsilon$, take $\Sigma$, $\{p_1, \dots, p_{2k}\}$, $S$, and $f$ as in \refprop{ThickMappingTorus}.
Take a Heegaard splitting of $\SSS^3$ with Heegaard surface $\Sigma$, and $H_1$, $M_1'$, $M$, $M_2'$, and $H_2$ as in \refsec{Topological}. 
Finally, take $K\subset\SSS^3$ as in \refsec{GeometricTangles}, meeting the Heegaard surface $\Sigma$ for $\SSS^3$ transversely in points $\{p_1, \dots, p_{2k}\}$, and let $P$ be any pants decomposition of $S$. 
Let $\delta > 0$.

Using \reflem{ExtendPAMS}, we homotope $K$ relative its intersection with $\partial H_1$ and 
$\partial H_2$ so that $H_i - K$ has a hyperbolic structure with totally geodesic boundary in which the length of $P$ is less than $\delta$.
Then $\overline{H}_i = H_i-(K\cup P)$ has a hyperbolic structure with totally geodesic boundary a collection of $3$-punctured spheres, for each $i$.

Now we consider $M_1' \cong S\times\big[-1,-\half\big]$ and $M_2'\cong S\times\big[\half,1\big]$.
For any integer $r>0$, the manifold $S\times \big[-1,-\half\big] - (P \times\{-1\} \cup f^r(P) \times \{-\half\})$ has hyperbolic structure with totally geodesic boundary, isometric to the convex core of $S(P,f^{r}(P))$.
Similarly, $S\times \big[\half,1\big] - (f^r(P) \times\{\half\} \cup P \times \{1\})$ has hyperbolic structure with totally geodesic boundary, isometric to the convex core of $S(f^{r}(P), P)$.

Consider $M=\Sigma\times\big[\!-\half,\half\big]$.
Let $A$ be the intersection of $K$ and $M$.
Let $B$ be the tangle homotopic to $A$ making the lengths of $P \times\{-\half\}$ and $P \times \{\half\}$ less than $\delta$ in $M - B$, given by \reflem{ExtendPAMS}.  
The manifold $M_{B,P} = M - (B \cup P \times \{\pm\half\})$ admits a hyperbolic structure with totally geodesic boundary a collection of $3$-punctured spheres.
The map $f^r$ extends to a self homeomorphism of $M = \Sigma\times\big[\!-\half,\half\big]$.
This homeomorphism takes $B$ to a collection of embedded arcs $f^r(B)$ in $f^r(M)$, and takes $P\times\{\pm\half\}$ to $f^r(P)\times\{\pm\half\}$. 

Glue  $\overline{H}_1$ and $\core(S(P,f^r(P)))$ together via $i_1$. 
Note this map takes $P$ to $P$ and takes $3$-punctured spheres to $3$-punctured spheres. Because there is a unique hyperbolic structure on a $3$-punctured sphere, this extends to an isometry.
Similarly, $i_1'$ induces an isometry from the right side of $\core(S(P,f^r(P)))$ to the left side of $f^r(M)$, $i_2'$ induces an isometry from the right side of $f^r(M)$ to the left side of $\core(S(f^r(P), P))$, and $i_2$ induces an isometry from the right side of $\core(S(f^r(P), P))$ to $\overline{H}_2$.

With these gluing maps, let
\[
N_r = \overline{H}_1 \cup \core(S(P,f^{r}(P))) \cup f^r(M_{B,P}) \cup \core(S(f^r(P),P)) \cup \overline{H}_2.
\]

By construction, $N_r$ is homeomorphic to $\SSS^3$ with a link removed. 
One component of this link is $K$, and the other components are copies of the components of $P$ and $f^r(P)$.
We call the cusps corresponding to the latter curves the \emph{horizontal} cusps.

The hyperbolic structures on $\overline{H}_1$, $\overline{H}_2$, and $M$ are fixed, and so are their volumes.
By Mostow--Prasad rigidity, the manifold $f^r(M)$ is isometric to $M$. 
It follows that the $R$-volume ratios of the links $N_r$ approach the $R$-volume ratio of  $M_f$ as $r$ goes to infinity. 
In particular, we may take $r$ such that the ratio is at least $1-\epsilon/2$.

To finish the proof, we we fill the horizontal cusps of our links to obtain the desired knots.
To do this, we use the Universal Hyperbolic Dehn Filling Theorem of Hodgson and Kerckhoff \cite{hk:univ} and the Drilling Theorem of Brock and Bromberg \cite{BrockBromberg}.
Together, these theorems provide a universal $L$ such that if the normalized length of each component of a Dehn filling slope is at least $L$, then the $\epsilon_3$-thick part of the filled manifold is $(1+\epsilon/100)$-bilipschitz to that of the original manifold, where $\epsilon_3$ is the $3$-dimensional Margulis constant.

In our case, we would like to fill our link complement along all of the horizontal meridians.  
By \reflem{ShortPantsBigMeridians}, there is $\delta$ such that the normalized lengths of these meridians will be at least $L$ provided the length of $P$ is less than $\delta$, and so we choose $\delta$ in this way.

Thus the $\epsilon_3$-thick part of the knot complement obtained by filling along the horizontal meridians is $(1+ \epsilon/100)$-bilipschitz to the $\epsilon_3$-thick part of our link complement.
Since the volume of the thin parts decrease under filling, the resulting knot is the desired one.
\end{proof}

\bibliographystyle{amsplain}
\bibliography{references}

\end{document}